%
%
%
%
\documentclass{amsart}

\usepackage{graphicx}

\newtheorem{theorem}{Theorem}[section]

\theoremstyle{definition}
\newtheorem{definition}[theorem]{Definition}

\newtheorem{corollary}{Corollary}[section]

\theoremstyle{remark}
\newtheorem{remark}[theorem]{Remark}

\numberwithin{equation}{section}



\begin{document}

\title{The theta splitting function}

\author{T. Agama}
\address{Department of Mathematics, African Institute for Mathematical science, Ghana
}
\email{emperortheo1991@gmail.com/emperordagama@yahoo.com}


\subjclass[2000]{Primary 54C40, 14E20; Secondary 46E25, 20C20}

\date{\today}

\dedicatory{}

\keywords{theta splitting; permuted; distribution}

\begin{abstract}
 In this paper we study the Theta splitting function $\Theta(s+1)$, a function defined on the positive integers. We study the distribution of this function for sufficiently large values of the integers.  As an application we show that \begin{align}\sum \limits_{m=0}^{s}\prod \limits_{\substack{0\leq j \leq m\\\sigma:[0,m]\rightarrow [0,m]\\\sigma(j)\neq \sigma(i)}}(s-\sigma(j))\sim s^s\sqrt{s}e^{-s}\sum \limits_{m=1}^{\infty}\frac{e^m}{m^{m+\frac{1}{2}}}.\nonumber
\end{align} and that \begin{align}\sum \limits_{j=0}^{s-1}e^{-\gamma j}\prod \limits_{m=1}^{\infty}\bigg(1+\frac{s-j}{m}\bigg)e^{\frac{-(s-j)}{m}}\sim \frac{e^{-\gamma s}}{\sqrt{2\pi}}\sum \limits_{m=1}^{\infty}\frac{e^m}{m^{m+\frac{1}{2}}}.\nonumber
\end{align}
\end{abstract}

\maketitle

\section{\textbf{Introduction}}
The Gamma function is one of the most useful function in mathematics. It was discovered by the then Swiss mathematician in attempt to generalize the factorial function to the non-integers. It is given by the integral \begin{align}\Gamma(x):=\int \limits_{0}^{\infty}t^{x-1}e^{-t}\nonumber
\end{align}The well-known functional equations $\Gamma(x+1)=x\Gamma(x)$ and $\Gamma(1)=1$ \cite{sebah2002introduction} makes the Gamma function a tractable function, as it allows us to study its algebraic and analytical properties. Indeed when $x$ is an integer, then the Gamma function collapses to the well-known factorial function given by $\Gamma(x+1)=x!$. The factorial function is also a very important, as is manisfested in the analytic constants it contains, such as $\pi$, $e$. It shows up a lot in many combinatorial problems, thus making it an indispensible tool in counting. In the following sequel we introduce the theta splitting function $\Theta(s+1)$, a function defined on the positive integers as
\begin{align}\Theta(s+1):=\begin{cases}1+s\Theta(s) \quad \text{if}\quad s\geq 1\\0 \quad \text{if}\quad s=0\end{cases}.\nonumber
\end{align} This function can be thought of as an analogue of the factorial function. We use this function to answer a certain combinatorial problem. In particular we find the numbers of ways a certain arrangement must be done.
\bigskip

\section{\textbf{The theta splitting function}}
In this section we introduce the Theta splitting function. This function can be considered to playing a role, not exactly, but slightly similar to the factorial function. As it turns out, it becomes very useful in understanding a certain combinatorial problem, which we will illustrate in the sequel.
 
\begin{definition}\label{theta function}
Let $s\geq 1$ be an integer, then we set \begin{align}\Theta(s+1):=\begin{cases}1+s\Theta(s) \quad \text{if}\quad s\geq 1\\0 \quad \text{if}\quad s=0\end{cases}.\nonumber
\end{align}
\end{definition}
\bigskip
The $\Theta$ function is connected to the gamma function by a certain relation, which we shall soon state. This relation will enable us to study the $\Theta$ function by infering from the properties of the gamma function $\Gamma$.
Next we state and prove an asymptotic estimate for the theta splitting function on the integers for sufficiently large values of $s$.

\begin{theorem}\label{integers}
For sufficiently large integers values of $s$, we have \begin{align}\Theta(s+1)\sim s^s\sqrt{s}e^{-s}\sum \limits_{m=1}^{\infty}\frac{e^m}{m^{m+\frac{1}{2}}}.\nonumber
\end{align}
\end{theorem}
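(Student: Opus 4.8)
The plan is to first collapse the recursion into a closed form and then feed that form into Stirling's approximation. Unfolding $\Theta(s+1)=1+s\Theta(s)$ against the initial value $\Theta(1)=0$ produces the falling-factorial expansion $\Theta(s+1)=1+s+s(s-1)+\cdots+s(s-1)\cdots 2=\sum_{k=0}^{s-1}\frac{s!}{(s-k)!}$, which after the substitution $m=s-k$ I expect to collapse to the compact identity
\begin{align}
\Theta(s+1)=s!\sum_{m=1}^{s}\frac{1}{m!}.\nonumber
\end{align}
This identity is the real engine of the proof, and I would establish it by induction on $s$: the base case $\Theta(2)=1$ is immediate, and the inductive step follows because the isolated $+1$ in the recurrence is exactly the missing $m=s$ term, since $s!\cdot\frac{1}{s!}=1$.

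With the closed form in hand, the asymptotic is a two-part Stirling computation. First I would apply Stirling's formula to the leading factorial, $s!\sim\sqrt{2\pi}\,s^{s+\frac{1}{2}}e^{-s}=\sqrt{2\pi}\,s^{s}\sqrt{s}\,e^{-s}$, which already delivers the factor $s^{s}\sqrt{s}\,e^{-s}$ demanded by the theorem. It then remains to identify the constant $\sqrt{2\pi}\sum_{m=1}^{s}\frac{1}{m!}$ with the series $\sum_{m=1}^{\infty}\frac{e^{m}}{m^{m+\frac{1}{2}}}$. The bridge is Stirling applied in the denominator: from $m!\sim\sqrt{2\pi}\,m^{m+\frac{1}{2}}e^{-m}$ one gets the termwise equivalence $\frac{\sqrt{2\pi}}{m!}\sim\frac{e^{m}}{m^{m+\frac{1}{2}}}$, and both series converge (for the second the root test applies, since the $m$-th root of the general term is $e/m^{1+\frac{1}{2m}}\to 0$). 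Passing the finite sum to its limit and replacing each reciprocal factorial by its Stirling surrogate then yields the stated form.

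The step I expect to be the main obstacle is precisely this last replacement. The termwise equivalence $\frac{\sqrt{2\pi}}{m!}\sim\frac{e^{m}}{m^{m+\frac{1}{2}}}$ is only asymptotic in $m$, so it does not on its own force the two convergent series to share a sum; the cleanly provable statement is in fact $\Theta(s+1)\sim\sqrt{2\pi}\,(e-1)\,s^{s}\sqrt{s}\,e^{-s}$, with the limiting constant $\sqrt{2\pi}\sum_{m\geq1}\frac{1}{m!}=\sqrt{2\pi}(e-1)$. To reach the form stated in the theorem I would interpret the right-hand series as the Stirling surrogate for that constant, obtained by substituting the asymptotic for $\frac{1}{m!}$ into each summand before summing. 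Making that substitution legitimate --- controlling the discrepancy $\frac{1}{m!}-\frac{1}{\sqrt{2\pi}}\frac{e^{m}}{m^{m+\frac{1}{2}}}$ uniformly in $m$ and confirming it is harmless at the level of the leading asymptotic --- is the delicate point that the argument must address.
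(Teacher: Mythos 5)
Your closed form $\Theta(s+1)=s!\sum_{m=1}^{s}\frac{1}{m!}$ and the asymptotic you call the ``cleanly provable statement'' are both correct, but you should press your final doubt to its conclusion: the discrepancy you flag is \emph{not} harmless, no uniform control of $\frac{1}{m!}-\frac{1}{\sqrt{2\pi}}\frac{e^{m}}{m^{m+\frac{1}{2}}}$ can rescue the printed statement, because the two candidate constants are genuinely different numbers. From your closed form and Stirling's formula one gets, rigorously,
\begin{align}
\Theta(s+1)\sim \sqrt{2\pi}\,(e-1)\,s^{s}\sqrt{s}\,e^{-s}.\nonumber
\end{align}
On the other hand, the sharp form of Stirling's inequality, $m!=\sqrt{2\pi m}\,(m/e)^{m}e^{\lambda_{m}}$ with $\frac{1}{12m+1}<\lambda_{m}<\frac{1}{12m}$, shows that $\frac{\sqrt{2\pi}}{m!}<\frac{e^{m}}{m^{m+\frac{1}{2}}}$ for \emph{every} $m\geq 1$, hence
\begin{align}
\sqrt{2\pi}(e-1)=\sum_{m=1}^{\infty}\frac{\sqrt{2\pi}}{m!}<\sum_{m=1}^{\infty}\frac{e^{m}}{m^{m+\frac{1}{2}}},\nonumber
\end{align}
with the $m=1$ term alone contributing a gap of at least $e-\sqrt{2\pi}>0.2$; numerically the two constants are about $4.307$ and $4.586$. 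Since a positive function cannot be asymptotically equivalent to two different nonzero constants times $s^{s}\sqrt{s}e^{-s}$, the theorem as stated is false, and the correct version is the one you proved, with constant $\sqrt{2\pi}(e-1)$.

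This also pinpoints the error in the paper's own argument, which makes precisely the move you refused to make. The paper writes $\Theta(s+1)=\Gamma(s+1)\sum_{j=0}^{s-1}\frac{1}{\Gamma(s-j+1)}$ and then substitutes Stirling's approximation for every $(s-j)!$, including the terms where $s-j$ stays bounded; after the change of variable $m=s-j$ these are exactly the dominant terms of the resulting series $\sum_{m}\frac{e^{m}}{m^{m+\frac{1}{2}}}$. Stirling's relative error at fixed $m$ does not tend to zero as $s\to\infty$, so the termwise substitution inflates the limiting constant by the factor $\approx 4.586/4.307\approx 1.065$. (The paper's numerical table obscures this: its entry for $\Theta(15)$ is miscomputed, since the recursion gives $\Theta(15)=1+14\,\Theta(14)\approx 1.498\times 10^{11}$ rather than $1.605\times 10^{11}$, and the correct value exhibits the persistent overshoot of the paper's formula.) So keep your derivation, state the result with the constant $\sqrt{2\pi}(e-1)$, and abandon the attempt to match the printed series: it cannot be done.
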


\begin{proof}
If $s$ is an integer, then by definition \ref{theta function}, we can write \begin{align}\Theta(s+1)=1+s+s(s-1)+\cdots +s(s-1)\cdots 2.\nonumber
\end{align}It follows that \begin{align}\Theta(s+1)&=\frac{\Gamma(s+1)}{\Gamma(s+1)}+\frac{\Gamma(s+1)}{\Gamma(s)}+\frac{\Gamma(s+1)}{\Gamma(s-1)}+\cdots +\frac{\Gamma(s+1)}{\Gamma(2)}\nonumber \\&=\Gamma(s+1)\sum \limits_{j=0}^{s-1}\frac{1}{\Gamma(s-j+1)}.\nonumber 
\end{align}Using Stirlings formula \cite{sebah2002introduction}, we can write\begin{align}\Gamma(s-j+1)=(s-j)!\sim \sqrt{2\pi (s-j)}e^{-(s-j)}(s-j)^{(s-j)}.\nonumber
\end{align}By plugging this estimate into the sum, we find that \begin{align}\Theta(s+1)\sim s^s\sqrt{s}\sum \limits_{j=0}^{s-1}\frac{1}{e^j(s-j)^{s-j+\frac{1}{2}}}.\nonumber
\end{align} By letting $m=s-j$, we can write \begin{align}\Theta(s+1)&\sim s^s\sqrt{s}e^{-s}\sum \limits_{m=1}^{s}\frac{e^m}{m^{m+\frac{1}{2}}}\nonumber \\&\sim s^s\sqrt{s}e^{-s}\sum \limits_{m=1}^{\infty}\frac{e^m}{m^{m+\frac{1}{2}}}\nonumber
\end{align}and the result follows immediately.
\end{proof}

\begin{remark}
Next we state an immediate consequence of this result. It gives us an estimate for a certain product permuted over the integers.
\end{remark}

\begin{corollary}
For sufficiently large integer values $s$, we have \begin{align}\sum \limits_{m=0}^{s}\prod \limits_{\substack{0\leq j \leq m \\ \sigma:[0,m]\rightarrow [0,m]\\\sigma(j)\neq \sigma(i)}}(s-\sigma(j))\sim s^s\sqrt{s}e^{-s}\sum \limits_{m=1}^{\infty}\frac{e^m}{m^{m+\frac{1}{2}}}.\nonumber
\end{align}
\end{corollary}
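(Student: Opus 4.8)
The plan is to recognize the double sum on the left as nothing but the expansion of $\Theta(s+1)$ already produced inside the proof of Theorem \ref{integers}, after which the stated asymptotic is immediate from that theorem. First I would unravel the inner product. The three conditions beneath the product sign say that $\sigma$ is a self-map of the finite set $[0,m]=\{0,1,\dots,m\}$ with $\sigma(j)\neq\sigma(i)$ for $i\neq j$; that is, $\sigma$ is injective, and an injective self-map of a finite set is a bijection. Hence, as $j$ runs over $[0,m]$ the value $\sigma(j)$ runs over the same set in some order, and since multiplication is commutative the product is independent of the chosen permutation. Thus
\begin{align}
\prod \limits_{\substack{0\leq j \leq m\\\sigma:[0,m]\rightarrow [0,m]\\\sigma(j)\neq \sigma(i)}}(s-\sigma(j))=\prod_{k=0}^{m}(s-k)=s(s-1)\cdots(s-m)=\frac{\Gamma(s+1)}{\Gamma(s-m)},\nonumber
\end{align}
so each summand collapses to a falling factorial expressed through the Gamma function.

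Next I would reindex. Setting $j=m+1$ rewrites the right-hand side above as $\Gamma(s+1)/\Gamma(s-j+1)$, which is exactly the general term in the identity $\Theta(s+1)=\Gamma(s+1)\sum_{j=0}^{s-1}1/\Gamma(s-j+1)$ established in the proof of Theorem \ref{integers}. Consequently the outer sum $\sum_{m=0}^{s}$ reproduces the very list of terms $1,\,s,\,s(s-1),\dots$ that builds up $\Theta(s+1)$, and I would identify the left-hand side with $\Theta(s+1)$ (up to its asymptotic equivalent). Invoking Theorem \ref{integers} then closes the argument with no further computation, giving
\begin{align}
\sum \limits_{m=0}^{s}\prod \limits_{\substack{0\leq j \leq m\\\sigma:[0,m]\rightarrow [0,m]\\\sigma(j)\neq \sigma(i)}}(s-\sigma(j))\sim s^{s}\sqrt{s}\,e^{-s}\sum \limits_{m=1}^{\infty}\frac{e^{m}}{m^{m+\frac{1}{2}}}.\nonumber
\end{align}

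The step I expect to be the main obstacle is the bookkeeping at the two ends of the reindexed sum. The shift $j=m+1$ carries the range $0\leq m\leq s$ to $1\leq j\leq s+1$, which differs from the range $0\leq j\leq s-1$ in $\Theta(s+1)$ precisely by the boundary contributions at $j=0,\,s,\,s+1$. I would have to verify that the term $\Gamma(s+1)/\Gamma(0)=0$ (the $m=s$ factor $s-s=0$) genuinely drops out, and then control the remaining endpoint discrepancy so that it leaves the claimed asymptotic undisturbed. Once the identification of the left-hand side with $\Theta(s+1)$ is made precise at these endpoints, Theorem \ref{integers} supplies the conclusion directly.
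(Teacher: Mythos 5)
Your reduction of the inner product to $\prod_{k=0}^{m}(s-k)$ is correct, and your instinct that the endpoint bookkeeping is "the main obstacle" is also correct --- but the obstacle is fatal, not cosmetic. Carry the reindexing $j=m+1$ to the end: the range $0\leq m\leq s$ becomes $1\leq j\leq s+1$, the $j=s+1$ term vanishes (its product contains the factor $s-s=0$), and what survives is
\begin{align}
\sum_{m=0}^{s}\prod_{k=0}^{m}(s-k)\;=\;\sum_{j=1}^{s}\frac{\Gamma(s+1)}{\Gamma(s-j+1)}\;=\;\bigl(\Theta(s+1)-1\bigr)+\frac{\Gamma(s+1)}{\Gamma(1)}\;=\;\Theta(s+1)+s!-1.\nonumber
\end{align}
Compared with $\Theta(s+1)=\sum_{j=0}^{s-1}\Gamma(s+1)/\Gamma(s-j+1)$ you have dropped the $j=0$ term (equal to $1$, harmless) but gained the $j=s$ term $\Gamma(s+1)/\Gamma(1)=s!$, and $s!$ is \emph{not} negligible against the main term: since $\Theta(s+1)=s!\sum_{m=1}^{s}1/m!$ and $\sum_{m=1}^{s}1/m!\to e-1$, one has $s!/\Theta(s+1)\to 1/(e-1)\approx 0.58$. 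So the left-hand side of the corollary is asymptotic to $\tfrac{e}{e-1}\Theta(s+1)$, not to $\Theta(s+1)$; equivalently, granting Theorem \ref{integers} and Stirling's $s!\sim\sqrt{2\pi}\,s^s\sqrt{s}e^{-s}$, it is asymptotic to $\bigl(\sqrt{2\pi}+\sum_{m\geq 1}e^m/m^{m+1/2}\bigr)s^s\sqrt{s}e^{-s}$ rather than the stated $\bigl(\sum_{m\geq 1}e^m/m^{m+1/2}\bigr)s^s\sqrt{s}e^{-s}$. The step you deferred ("control the remaining endpoint discrepancy so that it leaves the claimed asymptotic undisturbed") is therefore impossible: the discrepancy shifts the asymptotic constant.

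For comparison, the paper's own proof is exactly your identification carried out \emph{without} your caution: it asserts "by definition" that $\Theta(s+1)=1+\sum_{m=0}^{s}\prod(\cdots)$ and then cites Theorem \ref{integers}. Under the reading of the product that you (rightly) adopt, that identity is off by precisely the $s!$ you would have uncovered: the $m=s-1$ summand $s(s-1)\cdots 1$ duplicates the value $s(s-1)\cdots 2=s!$ already contributed by the $m=s-2$ summand, whereas the expansion $\Theta(s+1)=1+s+s(s-1)+\cdots+s(s-1)\cdots 2$ contains it only once. So your attempt stalls exactly where the paper's proof is silently wrong; to get a true statement one must either terminate the outer sum at $m=s-2$ or enlarge the constant on the right by $\sqrt{2\pi}$. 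As stated, the corollary cannot be rescued by your argument (or the paper's).
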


\begin{proof}
We observe that by definition\begin{align}\Theta(s+1)=1+\sum \limits_{m=0}^{s}\prod \limits_{\substack{0\leq j \leq m\\ \sigma:[0,m]\rightarrow [0,m]\\\sigma(j)\neq \sigma(i)}}(s-\sigma(j)).\nonumber
\end{align}Using Theorem \ref{integers}, the result follows immediately.
\end{proof}

\begin{theorem}
For sufficiently large values of the integers $s$\begin{align}\sum \limits_{j=0}^{s-1}e^{-\gamma j}\prod \limits_{m=1}^{\infty}\bigg(1+\frac{s-j}{m}\bigg)e^{\frac{-(s-j)}{m}}\sim \frac{e^{-\gamma s}}{\sqrt{2\pi}}\sum \limits_{m=1}^{\infty}\frac{e^m}{m^{m+\frac{1}{2}}},\nonumber
\end{align}where $\gamma$ is the euler-macheroni constant.
\end{theorem}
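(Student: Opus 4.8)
The plan is to recognize the infinite product as the Weierstrass canonical product for the reciprocal Gamma function and then reduce the entire statement to Theorem \ref{integers} together with Stirling's formula. Recall the Weierstrass representation
\begin{align}
\frac{1}{\Gamma(z)}=z\,e^{\gamma z}\prod_{m=1}^{\infty}\left(1+\frac{z}{m}\right)e^{-z/m},\nonumber
\end{align}
which, after using $z\Gamma(z)=\Gamma(z+1)$, rearranges to
\begin{align}
\prod_{m=1}^{\infty}\left(1+\frac{z}{m}\right)e^{-z/m}=\frac{1}{e^{\gamma z}\Gamma(z+1)}.\nonumber
\end{align}
First I would substitute $z=s-j$ into this identity, so that each inner product appearing in the sum is replaced by $\frac{1}{e^{\gamma(s-j)}\Gamma(s-j+1)}$.

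The next step is a short bookkeeping simplification. Multiplying by the prefactor $e^{-\gamma j}$ and observing that $e^{-\gamma j}\cdot e^{-\gamma(s-j)}=e^{-\gamma s}$ is independent of $j$, the whole left-hand side collapses to
\begin{align}
\sum_{j=0}^{s-1}e^{-\gamma j}\prod_{m=1}^{\infty}\left(1+\frac{s-j}{m}\right)e^{-(s-j)/m}=e^{-\gamma s}\sum_{j=0}^{s-1}\frac{1}{\Gamma(s-j+1)}.\nonumber
\end{align}
I would then invoke the identity established in the course of proving Theorem \ref{integers}, namely $\Theta(s+1)=\Gamma(s+1)\sum_{j=0}^{s-1}\frac{1}{\Gamma(s-j+1)}$, to rewrite the remaining sum as $\Theta(s+1)/\Gamma(s+1)$. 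Thus the left-hand side equals exactly $e^{-\gamma s}\,\Theta(s+1)/\Gamma(s+1)$.

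Finally I would apply Theorem \ref{integers} to replace $\Theta(s+1)$ by its asymptotic $s^s\sqrt{s}\,e^{-s}\sum_{m=1}^{\infty}e^{m}/m^{m+\frac12}$, and Stirling's formula $\Gamma(s+1)\sim\sqrt{2\pi s}\,s^s e^{-s}$ in the denominator. The factors $s^s e^{-s}$ cancel and $\sqrt{s}/\sqrt{2\pi s}$ reduces to $1/\sqrt{2\pi}$, producing the claimed asymptotic. I do not expect a serious obstacle: the argument is essentially algebraic once the Weierstrass product is identified. The only point needing a little care is that the asymptotic equivalence for $\Theta(s+1)$ is preserved after division by $\Gamma(s+1)$; this is legitimate because Stirling's formula furnishes a genuine asymptotic for $\Gamma(s+1)$ rather than a mere bound, so the quotient of the two asymptotics is itself an asymptotic for the quotient.
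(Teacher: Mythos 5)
Your proposal is correct and follows essentially the same route as the paper: both identify the infinite product via the Weierstrass representation $\frac{1}{\Gamma(z+1)}=e^{\gamma z}\prod_{m=1}^{\infty}\left(1+\frac{z}{m}\right)e^{-z/m}$, reduce the left-hand side to $e^{-\gamma s}\,\Theta(s+1)/\Gamma(s+1)$ using the identity $\Theta(s+1)=\Gamma(s+1)\sum_{j=0}^{s-1}\frac{1}{\Gamma(s+1-j)}$, and conclude by Theorem \ref{integers} together with Stirling's formula. If anything, your write-up is slightly more careful than the paper's, since you make explicit the bookkeeping step $e^{-\gamma j}\cdot e^{-\gamma(s-j)}=e^{-\gamma s}$ that the paper leaves implicit.
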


\begin{proof}
By definition, we can write \begin{align}\Theta(s+1)=\Gamma(s+1) \sum \limits_{j=0}^{s-1}\frac{1}{\Gamma(s+1-j)}.\nonumber
\end{align}Using the identity (See \cite{sebah2002introduction})\begin{align}\frac{1}{\Gamma(s+1-j)}=e^{(s-j)\gamma}\prod \limits_{m=1}^{\infty}\bigg(1+\frac{s-j}{m}\bigg)e^{\frac{-(s-j)}{m}}\nonumber
\end{align}we can write \begin{align}\sum \limits_{j=0}^{s-1}e^{\gamma (s-j)}\prod \limits_{m=1}^{\infty}\bigg(1+\frac{s-j}{m}\bigg)e^{\frac{-(s-j)}{m}}&=\frac{\Theta(s+1)}{\Gamma(s+1)}.\nonumber
\end{align}Using Stirlings formula \cite{sebah2002introduction} and Theorem \ref{integers} the result follows immediately.
\end{proof}
\bigskip

\section{\textbf{Distribution of the theta splitting function}} 
In this section we present a table that gives the distribution of the theta  splitting function $\Theta(s)$ function for the first fifteen values of the integers. This function just like the factorial function grows dramatically quickly. We match the actual values of the Theta splitting function for the first fifteen integers as against approximate values using the asymptotic formula in Theorem \ref{integers}.
 
\begin{table}[ht]
\caption{}\label{eqtable}
\renewcommand\arraystretch{1.5}
\noindent\[
\begin{array}{|c|c|c|c|c|c|c|c|c|c|c|c|}
\hline
Values~of s & 1 & 2 & 3 & 4 & 5 & 6 & 7 & 8 & 9 & 10 & 11\\
\hline
\Theta(s)& 0 & 1 & 3 & 10 & 41 & 206 & 1237 & 8660 & 69281 & 623530 & 6235301\\
\hline 
\end{array}
\]
\end{table}

\begin{table}[ht]
\caption{}\label{eqtable}
\renewcommand\arraystretch{1.5}
\noindent\[
\begin{array}{|c|c|c|c|c|}
\hline
Values~of s & 12 & 13 & 14 & 15 \\
\hline
\Theta(s)& 68588312 & 823059745 & 1.069977669\times 10^{10} & 1.604966503\times 10^{11}\\
\hline 
\end{array}
\]
\end{table}

\begin{table}[ht]
\caption{}\label{eqtable}
\renewcommand\arraystretch{1.5}
\noindent\[
\begin{array}{|c|c|c|c|c|c|c|c|c|c|}
\hline
Values~of s & 1 & 2 & 3 & 4 & 5 & 6 & 7 & 8\\
\hline
s^s\sqrt{s}e^{-s}\sum \limits_{m=1}^{\infty}\frac{e^m}{m^{m+\frac{1}{2}}}& 0 & 1.688 & 3.514 & 10.687 & 43.04 & 216.11 & 1300.256 & 9119.823\\
\hline 
\end{array}
\]
\end{table}

\begin{table}[ht]
\caption{}\label{eqtable}
\renewcommand\arraystretch{1.5}
\noindent\[
\begin{array}{|c|c|c|c|c|c|c|c|c|c|c|c|}
\hline
Values~of s & 9 & 10 & 11 & 12 & 13 \\
\hline
s^s\sqrt{s}e^{-s}\sum \limits_{m=1}^{\infty}\frac{e^m}{m^{m+\frac{1}{2}}}& 73067.075 & 658364.17 & 6589733.73 & 72541956.39 & 871052794.3\\
\hline 
\end{array}
\]
\end{table}

\begin{table}[ht]
\caption{}\label{eqtable}
\renewcommand\arraystretch{1.5}
\noindent\[
\begin{array}{|c|c|c|c|c|c|c|c|c|c|c|c|}
\hline
Values~of s & 14 & 15\\
\hline
s^s\sqrt{s}e^{-s}\sum \limits_{m=1}^{\infty}\frac{e^m}{m^{m+\frac{1}{2}}}& 1.132973304\times 10^{10} & 1.586888658\times 10^{11}\\
\hline 
\end{array}
\]
\end{table}

\section{\textbf{Applications}}
Suppose in a large church auditorium there are $s$ seats on each of the $s$ rows. If $\frac{s(s+1)}{2}$ people are to be assigned to a seat one at a time in the auditorium so that the following rules has to be obeyed\begin{enumerate}
\item [(i)] One particular person must be assigned to the first row

\item [(ii)] Two particular people must be assigned to the second row.

\item [(iii)] More generally, some  $j$ particular people must be assigned to the $j$th row for $1\leq j\leq s$.
\end{enumerate} 
Let $\mathcal{N}(s)$ denotes the number of such possible assignments, then \begin{align}\mathcal{N}(s)\sim s^s\sqrt{s}e^{-s}\sum \limits_{m=1}^{\infty}\frac{e^m}{m^{m+\frac{1}{2}}}.\nonumber
\end{align}
\footnote{
\par
.}%
.




\bibliographystyle{amsplain}

\end{document}